\newtheorem{theorem}{Theorem}[section]
\newtheorem{lemma}[theorem]{Lemma}
\newtheorem{corollary}[theorem]{Corollary}
\theoremstyle{definition}
\newtheorem{remark}[theorem]{Remark}
\numberwithin{equation}{section}
\numberwithin{figure}{section}
\newcommand\Qcal{\mathcal{Q}}
\newcommand\be{\mathbf{e}}
\newcommand\bp{\mathbf{p}}
\newcommand\bv{\mathbf{v}}
\newcommand\bx{\mathbf{x}}
\newcommand\bz{\mathbf{z}}
\newcommand\Cscr{\mathscr{C}}
\newcommand\C{\mathbb{C}}
\newcommand\D{\overline{\mathbb D}}
\renewcommand\D{\mathbb D}
\newcommand\R{\mathbb{R}}
\newcommand\igot{\mathfrak{i}}
\renewcommand\igot{\mathfrak{i}}
\renewcommand\imath{\igot}
\newcommand\zero{\mathbf{0}}
\newcommand\wt{\widetilde}
\newcommand\di{\partial}
\newcommand\dibar{\overline\partial}
\newcommand\dist{\mathrm{dist}}
\renewcommand\span{\mathrm{span}}
\newcommand\tr{\mathrm{tr}}
\newcommand\Hess{\mathrm{Hess}}
\def\dist{\mathrm{dist}}
\def\span{\mathrm{span}}
\numberwithin{equation}{section}
\begin{document}
\title{Every smoothly bounded $p$-convex domain in $\R^n$ admits a \\
$p$-plurisubharmonic defining function}
\author{Franc Forstneri{\v c}}

\address{Franc Forstneri\v c, Faculty of Mathematics and Physics, University of Ljubljana, and Institute of Mathematics, Physics, and Mechanics, Jadranska 19, 1000 Ljubljana, Slovenia}
\email{franc.forstneric@fmf.uni-lj.si}

\thanks{The author is supported by research program P1-0291 and grant J1-3005 from ARRS, Republic of Slovenia.}

\subjclass[2010]{Primary 53A10.  Secondary 32T27, 30C80}

\date{December 1, 2021}

\keywords{minimal submanifold, $p$-convex domain, $p$-plurisubharmonic defining function}

\begin{abstract}
We show that every bounded domain $D$ in $\R^n$ 
with smooth $p$-convex boundary for $2\le p < n$ admits a smooth defining function $\rho$ which is 
$p$-plurisubharmonic on $\overline D$; if in addition $bD$ has no $p$-flat points then $\rho$ can be
chosen strongly $p$-plurisubharmonic on $D$. If $bD$ is $2$-convex then 
for any open connected conformal surface $M$ and conformal harmonic map 
$f:M\to \overline D$, either $f(M)\subset D$ or $f(M)\subset bD$.
In particular, every conformal harmonic map $\D^*\to D$ from the punctured disc extends
to a conformal harmonic map $\D\to D$.
\end{abstract}

\maketitle

%
%
%
%
\section{Introduction}\label{sec:intro} 

It is classical that a bounded convex domain with smooth boundary in $\R^n$ has a smooth convex 
defining function; see Herbig and McNeal \cite{HerbigMcNeal2012} and the references therein
(in particular, Weinstock \cite[pp.\ 402--403]{Weinstock1975} and
Gilbarg and Trudinger \cite[pp.\ 354--357]{GilbargTrudinger1983}).
In this paper we prove the analogous result for $p$-convex domains for any 
$2\le p<n$. This class of domains, and the associated $p$-plurisubharmonic
functions, are closely related to $p$-dimensional minimal submanifolds; in particular, to minimal
surfaces when $p=2$. They were studied by Harvey and Lawson \cite{HarveyLawson2013IUMJ}; 
see also \cite{HarveyLawson2011ALM,HarveyLawson2012AM} 
and \cite[Sect.\ 8.1]{AlarconForstnericLopez2021}.

Let $\bx=(x_1,\ldots,x_n)$ denote the coordinates on $\R^n$. 
Given a $\Cscr^2$ function $\rho:\Omega\to \R$ on a domain $\Omega\subset \R^n$,
we denote by $\nabla^2\rho(\bx)=\Hess_\rho(\bx)$ the Hessian of $\rho$ at $\bx\in \Omega$, 
i.e., the quadratic form on $T_\bx\R^n=\R^n$
represented by the matrix $\big(\frac{\di^2\rho}{\di x_i\di x_j}(\bx)\big)$.
The trace of $\Hess_\rho(\bx)$ equals the Laplacian of $\rho$ at $\bx$: $\tr\, \Hess_\rho(\bx)=\Delta \rho(\bx)$.
For $2\le p\le n-1$ we denote by $G_p(\R^n)$ the Grassman manifold of $p$-planes in $\R^n$.
Given $\Lambda\in G_p(\R^n)$, we denote by $\tr_\Lambda \Hess_\rho(\bx)\in\R$ 
the trace of the restriction of $\Hess_\rho(\bx)$ to $\Lambda$. 
Our main result is the following.

\begin{theorem}\label{th:main1}
Let $D$ be a bounded domain in $\R^n,\ n\ge 3,$ with boundary of class $\Cscr^{r,\alpha}$
for some $r\in \{2,3,\ldots\}$ and $0<\alpha\le 1$, and let $p\in \{2,\ldots,n-1\}$. If 
the principal interior curvatures $\nu_1(\bx) \le \nu_2(\bx)\le \cdots\le \nu_{n-1}(\bx)$ of $bD$ 
at every point $\bx\in bD$ satisfy
\begin{equation}\label{eq:p-convexity}
	\nu_1(\bx) + \nu_2(\bx) +\cdots + \nu_p(\bx)\ge 0, 
\end{equation}
then there exists a defining function $\rho$ of class $\Cscr^{r,\alpha}$ for $D$ such that 
\begin{equation}\label{eq:p-psh}
	\tr_\Lambda \Hess_\rho(\bx)\ge 0\ \ \text{for all}\ \bx\in \overline D\ \text{and}\ \Lambda\in G_p(\R^n).
\end{equation}
If there are no points $\bx\in bD$ with $\nu_i(\bx)=0$ for $i=1,\ldots,p$
(such a point is called $p$-flat), then $\rho$ can be chosen such that strict inequality holds in 
\eqref{eq:p-psh} for all $\bx\in D$ and $\Lambda\in G_p(\R^n)$.
\end{theorem}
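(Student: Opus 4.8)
The plan is to build $\rho$ from the signed distance function and to read off \eqref{eq:p-psh} from the behaviour of the principal curvatures of the parallel hypersurfaces $\{\dist(\cdot,bD)=t\}$ as $t$ increases into $D$. First I record the linear-algebra reformulation that makes \eqref{eq:p-psh} tractable: for a symmetric form $Q$ on $\R^n$ one has $\tr_\Lambda Q\ge 0$ for every $\Lambda\in G_p(\R^n)$ if and only if the sum of the $p$ smallest eigenvalues of $Q$ is nonnegative, the minimum of $\tr_\Lambda Q$ over the Grassmannian being attained on the span of eigenvectors for the $p$ smallest eigenvalues. Thus \eqref{eq:p-psh} asks exactly that the $p$ smallest eigenvalues of $\Hess_\rho(\bx)$ sum to a nonnegative (resp.\ positive) number.

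Next I set $\delta$ to be the signed distance to $bD$, negative inside $D$; since $bD$ is of class $\Cscr^{r,\alpha}$ with $r\ge 2$, the reach is positive and $\delta$ is $\Cscr^{r,\alpha}$ on a one-sided collar $U=\{-s_0<\delta\le 0\}$. At a point at depth $t=-\delta$ the eigenvalues of $\Hess_\delta$ are $0$ in the direction $\nabla\delta$ (the unit normal, which lies in the kernel of $\Hess_\delta$ because $|\nabla\delta|\equiv 1$) together with the principal curvatures $\nu_i(t)$ of the parallel hypersurface, which satisfy $\nu_i(0)=\nu_i(\bx)$ at the nearest boundary point. The decisive observation is monotonicity: each $\nu_i(t)$ is nondecreasing in $t$ up to its focal value, and strictly increasing whenever $\nu_i\ne 0$. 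Hence the sum of the $p$ smallest among the $\nu_i(t)$ is nondecreasing in $t$, is $\ge \nu_1+\cdots+\nu_p\ge 0$ by \eqref{eq:p-convexity}, and—if there are no $p$-flat points, so that at least one relevant curvature is nonzero—is \emph{strictly} positive for every $t>0$. This is where both hypotheses enter and where the strict/non-strict dichotomy is produced.

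To turn this into a statement about a function I compose: put $\rho=\phi\circ\delta$ with $\phi(0)=0$ and $\phi'>0$, so that $\rho$ is again a defining function on $U$, and compute
\[
	\Hess_\rho=\phi'(\delta)\,\Hess_\delta+\phi''(\delta)\,d\delta\otimes d\delta .
\]
Since $d\delta\otimes d\delta$ is the rank-one projection onto the $0$-eigendirection of $\Hess_\delta$, choosing $\phi''$ large makes the normal eigenvalue of $\Hess_\rho$ equal to $\phi''(\delta)$, hence too large to lie among the $p$ smallest, while the tangential eigenvalues are merely rescaled to $\phi'(\delta)\nu_i(t)$. The $p$ smallest eigenvalues of $\Hess_\rho$ are then $\phi'(\delta)$ times the $p$ smallest $\nu_i(t)$, and the preceding paragraph yields \eqref{eq:p-psh}, with strict inequality on $D\setminus\Sigma$ when $bD$ has no $p$-flat points; here $\Sigma\subset D$ denotes the cut locus (medial axis), the closed set where $\delta$ fails to be smooth.

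It remains to globalize across $\Sigma$, which I expect to be the main obstacle. The point is that $-\delta=\dist(\cdot,bD)$ is a minimum of the smooth functions $\bx\mapsto|\bx-\by|$, so near $\Sigma$ the function $\rho=\phi\circ\delta$ is a \emph{maximum} of finitely many smooth sheets, on each of which the computation above gives strict $p$-plurisubharmonicity; equivalently $\delta$ is semiconvex on $D$ (a supremum of functions with locally lower-bounded Hessians), so the distributional Hessian of $\rho$ carries only a nonnegative singular part along $\Sigma$ and $\rho$ is $p$-plurisubharmonic across $\Sigma$ in the viscosity sense. I would therefore smooth $\rho$ in a neighbourhood of $\Sigma$ compactly contained in $D$—either by taking the regularized maximum of the local sheets, or by mollifying the semiconvex function on $\{\delta<-s_0/2\}$ and gluing it to the untouched $\rho=\phi\circ\delta$ on the collar by a further regularized maximum of two strictly $p$-plurisubharmonic functions. (In the non-strict case one may instead simply take the regularized maximum of $\phi\circ\delta$ with a suitable constant, which excises $\Sigma$ altogether.) Because all of this happens at positive distance from $bD$, the modified function still equals $\phi\circ\delta$ near $bD$ and is a genuine $\Cscr^{r,\alpha}$ defining function for $D$, now $p$-plurisubharmonic on $\overline D$ and strictly so on $D$ in the stated case. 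The delicate point is controlling the possibly intricate local geometry of $\Sigma$ and checking that the regularization preserves strict $p$-plurisubharmonicity and the defining-function property at once.
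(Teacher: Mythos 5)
Your collar analysis coincides with the paper's: the trace lemma for sums of the $p$ smallest eigenvalues, the Li--Nirenberg $\Cscr^{r,\alpha}$ regularity of the signed distance $\delta$, the monotonicity $\nu_i(\bx)=\nu_i/(1+t\nu_i)\ge\nu_i$ along inward normals (strict unless $\nu_i=0$), and the composition with a convex increasing $\phi$ to lift the zero normal eigenvalue are all exactly the paper's steps, and they are correct. The genuine gap is in the globalization across the cut locus $\Sigma$, which you rightly identify as the main obstacle but do not close. First, the structural claim that near $\Sigma$ the function is ``a maximum of finitely many smooth sheets'' is false: a point may have a continuum of nearest boundary points (the centre of a ball), and at focal-type cut points there is a unique nearest point yet no smooth sheet at all, since the sheet's Hessian blows up precisely there; so ``regularized maximum of the local sheets'' is not available, and the semiconvexity/viscosity substitute would still require control of $\Sigma$ (delicate at $\Cscr^{2,\alpha}$ regularity) plus a uniform strict lower bound near $\Sigma$ that you never establish. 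Second, and decisively, the proposed glue fails: a regularized maximum $M_\eta(u,v)$ yields a well-defined smooth function on the union of the two regions only if $u\ge v+\eta$ near one edge of the transition zone and $v\ge u+\eta$ near the other, so that $M_\eta(u,v)$ reduces to $u$ at one edge and to $v$ at the other. But your $v$ is a mollification of $u=\phi\circ\delta$, hence uniformly as close to $u$ as you wish on the transition zone; neither domination can ever hold, $M_\eta(u,v)$ never switches over to $v$, and at the inner edge the glued object still involves $u$ --- exactly where $u$ ceases to be controlled. Shifting $v$ by a constant does not help: it fixes one edge and breaks the other.

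The repair is the device you mention only parenthetically, and only for the non-strict case --- and it is in fact the paper's entire proof: never approach $\Sigma$ at all. Compose further with a convex increasing $\psi$ that is constant for $t\le c$ and equals $t$ for $t\ge c/2$, where $c<0$ is chosen so small that $\{c\le\delta\le 0\}$ lies inside the collar of smoothness of $\delta$. Then $\rho=\psi\circ\phi\circ\delta$ extends by that constant to all of $\overline D$, is $\Cscr^{r,\alpha}$, equals $\phi\circ\delta$ near $bD$ (so it is a defining function), and is $p$-plurisubharmonic on $\overline D$ because a convex nondecreasing function of a $p$-plurisubharmonic function is again $p$-plurisubharmonic: indeed $\tr_\Lambda\Hess_{\psi\circ w}=\psi'(w)\tr_\Lambda\Hess_w+\psi''(w)\,|P_\Lambda\nabla w|^2\ge 0$. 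This gives the first assertion with no analysis of $\Sigma$ whatsoever. For the strict case you do not need mollification either: set $\tilde\rho=\rho+\epsilon\,\chi\,|\bx|^2$, where $\chi$ is a cutoff equal to $1$ on $\{\delta\le c/3\}$ with compact support in $D$, so that $d\chi$ is supported in a compact subset of the collar where $\rho=\phi\circ\delta$ is strictly $p$-plurisubharmonic with a uniform positive lower bound. Since $|\bx|^2$ is strictly $p$-plurisubharmonic everywhere, on $\{\chi=1\}$ the sum is strict, on the support of $d\chi$ strictness survives for small $\epsilon$ by compactness, and near $bD$ the function is untouched. This single additive perturbation replaces your entire discussion of semiconvexity, mollification, and gluing near $\Sigma$.
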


We say that $bD$ is $p$-convex if condition \eqref{eq:p-convexity} holds for every
$\bx\in bD$, and is strongly $p$-convex at $\bx\in bD$ if strict inequality holds in \eqref{eq:p-convexity}. 
Note that $1$-convexity is the usual convexity. In this case, Theorem \ref{th:main1} also holds
and it coincides with the main result of \cite{HerbigMcNeal2012}
(see also \cite[pp.\ 402--403]{Weinstock1975} and \cite[pp.\ 354--357]{GilbargTrudinger1983}).
 
A $\Cscr^2$ function $\rho$ satisfying \eqref{eq:p-psh} is said to be $p$-plurisubharmonic on $\overline D$;
it is strictly $p$-plurisubharmonic at $\bx$ if strict inequality holds at this point.
Recall that $\rho$ is $p$-plurisubharmonic
on a domain $\Omega\subset \R^n$ is and only if the restriction $\rho|_M$ to any 
minimal $p$-dimensional submanifold $M\subset \Omega$ is a subharmonic function
on $M$ in the induced metric, $\Delta_M(\rho|_M)\ge 0$ (cf.\ \cite{HarveyLawson2013IUMJ}
and \cite[Sect.\ 8.1]{AlarconForstnericLopez2021}). 
For affine subspaces this follows by observing that if $\bv_1,\ldots, \bv_p \in \R^n$ is an orthonormal 
basis of $\Lambda\in G_p(\R^n)$ and we set 
$
	\tilde\rho(u_1,\ldots, u_p)=\rho\big(\bx+\sum_{j=1}^p u_j\bv_j\big),
$
then 
\[ 
	\tr_\Lambda \Hess_\rho(\bx) = \Delta \tilde \rho(0).  
\] 
The analogous formula holds on a minimal submanifold (cf.\ \cite[Equation (2.10)]{HarveyLawson2013IUMJ}).
Denoting by $\lambda_1\le\lambda_2\le\cdots\le\lambda_n$ the
eigenvalues of $\Hess_\rho(\bx)$, we have 
\[
	\min_{\Lambda \in G_p(\R^n)} \tr_\Lambda \Hess_\rho(\bx)
	= \lambda_1+\lambda_2+\cdots + \lambda_p.
\]
(See \cite[Lemma 2.5 and Corollary 2.6]{HarveyLawson2013IUMJ} or Lemma \ref{lem:trace} below.)
A domain $D\subset\R^n$ 
is said to be $p$-convex if it admits a strongly $p$-plurisubharmonic exhaustion function
$\rho:D\to\R$, i.e., such that the set $\{\bx \in D:\rho(\bx)\le c\}$ is compact for every $c\in\R$.

Suppose now that $\rho$ is a $\Cscr^2$ defining function for $D$, i.e., $D=\{\rho<0\}$
and $d\rho\ne 0$ on $bD=\{\rho=0\}$. Condition \eqref{eq:p-convexity} is then equivalent to
the following restricted version of \eqref{eq:p-psh}:
\begin{equation}\label{eq:p-convexity2}
	\tr_\Lambda \Hess_\rho(\bx)\ge 0\ \ \text{for every point}\ \bx\in bD\ 
	\text{and $p$-plane}\ \Lambda \subset T_\bx bD.
\end{equation}
Indeed, choosing $\rho$ such that $|\nabla\rho|=1$ on $bD$, the Hessian $\Hess_\rho(\bx)$
at the point $\bx\in bD$ restricted to the tangent space $T_\bx bD$ is the second fundamental form 
of $bD$ at $\bx$ with respect to the inner normal vector $-\nabla\rho(\bx)$
(see \cite[Remark 3.11]{HarveyLawson2013IUMJ}), and 
\[
	\min_{\Lambda\subset T_\bx bD}
	\tr_\Lambda \Hess_\rho(\bx) = \nu_1(\bx)+\nu_2(\bx)+\cdots +\nu_p(\bx).
\]
(The minimum is over all $p$-planes $\Lambda\subset T_\bx bD$. See
\cite[Remark 3.12]{HarveyLawson2013IUMJ} and Lemma \ref{lem:trace}.) 
For a bounded domain $D\subset \R^n$ with smooth boundary we have
(see \cite[Summary 3.16]{HarveyLawson2013IUMJ})   
\[
	\text{$D$ is $p$-convex} \ \Longleftrightarrow\ \text{$bD$ is $p$-convex},
\]
and these are further equivalent to the condition that $-\log\dist(\cdotp,bD)$ is a
$p$-plurisubharmonic function in an interior collar around $bD$. However, the existence 
of a $p$-plurisub\-harmonic defining function is a more convenient property, yielding applications
which are not possible by using unbounded $p$-plurisubharmonic exhaustion functions.

With this terminology, Theorem \ref{th:main1} can be stated as follows.

%
%
\begin{theorem}\label{th:main-simple}
Every bounded $p$-convex domain $D\subset \R^n$ $(2\le p<n)$ with smooth boundary 
admits a smooth defining function $\rho$ which is $p$-plurisubharmonic on $\overline D$.
If in addition $bD$ has no $p$-flat points then $\rho$ can be chosen strongly 
$p$-plurisubharmonic on $D$.
\end{theorem}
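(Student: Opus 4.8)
The plan is to manufacture $\rho$ from the signed distance function to $bD$, to repair its single defective (normal) direction by an exponential reparametrization, and then to splice the result across the cut locus to a ($p$-plurisubharmonic, respectively strongly so) interior model by a regularized maximum.

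First I would work in a one-sided collar. Let $\delta$ denote the signed distance to $bD$, negative in $D$; for a $\Cscr^{r,\alpha}$ boundary it is $\Cscr^{r,\alpha}$ on a collar $U=\{-\epsilon<\delta\le 0\}\cap\overline D$ with $|\nabla\delta|\equiv 1$. Differentiating $|\nabla\delta|^2=1$ gives $\Hess_\delta\,\nabla\delta=0$, so at each $\bx\in U$ the Hessian has a zero eigenvalue in the normal direction, while its restriction to the tangent space of the level set $\{\delta=\delta(\bx)\}$ is the second fundamental form of that inner parallel hypersurface, with eigenvalues its principal curvatures $\kappa_1(\bx)\le\cdots\le\kappa_{n-1}(\bx)$.

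The conceptual heart of the argument, and the step I expect to be the main obstacle, is a monotonicity statement: $p$-convexity is preserved, and strictly improved in the interior, as one moves inward. By the tube formula $\kappa_i(\bx)=\nu_i/(1-|\delta(\bx)|\nu_i)$, where $\nu_i$ are the principal curvatures at the foot point $\pi(\bx)\in bD$; shrinking $\epsilon$ keeps the denominators positive, and $p$-convexity forces $\nu_p\ge 0$. Since $\kappa_i-\nu_i=|\delta|\nu_i^2/(1-|\delta|\nu_i)\ge 0$ and $\nu\mapsto\nu/(1-|\delta|\nu)$ is increasing (so the ordering persists), one obtains
\[
\kappa_1(\bx)+\cdots+\kappa_p(\bx)\ \ge\ \nu_1(\pi\bx)+\cdots+\nu_p(\pi\bx)\ \ge\ 0,
\]
with strict inequality whenever $\delta(\bx)<0$ and $\pi(\bx)$ is not $p$-flat. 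The one remaining eigenvalue, the vanishing normal one, is the sole possible obstruction to $p$-plurisubharmonicity, and I would neutralize it by setting $g=K^{-1}(e^{K\delta}-1)$ on $U$. Then $g$ is $\Cscr^{r,\alpha}$, negative in $D\cap U$, vanishes with nonvanishing differential on $bD$, and
\[
\Hess_g=e^{K\delta}\bigl(\Hess_\delta+K\,d\delta\otimes d\delta\bigr)
\]
has eigenvalues $e^{K\delta}K$ (normal) and $e^{K\delta}\kappa_i$ (tangential). Taking $K>\sup_U\kappa_p<\infty$ pushes the normal eigenvalue above the $p$-th tangential one, so by Lemma \ref{lem:trace} the sum of the $p$ smallest eigenvalues of $\Hess_g$ equals $e^{K\delta}(\kappa_1+\cdots+\kappa_p)\ge 0$; thus $g$ is $p$-plurisubharmonic on $U$, and strongly so on $D\cap U$ in the no-$p$-flat case.

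Finally I would globalize by a regularized maximum. Choose $R$ with $\overline D\subset\{|\bx|<R\}$ and set $w(\bx)=A(|\bx|^2-R^2)$, which is strongly $p$-plurisubharmonic ($\Hess_w=2A\,\Id$) and negative on $\overline D$; for $A$ small, $w<g$ near $bD$ while $w>g$ near the inner edge $\{\delta=-\epsilon\}$. Define $\rho$ to be the regularized maximum of $g$ and $w$ on $U$ and to equal $w$ on the remainder of $\overline D$; this is one smooth ($\Cscr^{r,\alpha}$) defining function. Because the restriction of a regularized maximum to a minimal $p$-submanifold is a smooth convex nondecreasing function of two subharmonic functions, hence subharmonic, $\rho$ is $p$-plurisubharmonic on $\overline D$, which proves the first assertion (one could equally glue $g$ to a constant). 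For the second assertion, every point of $D$ lies in a region where the active pieces are strongly $p$-plurisubharmonic — $w$ throughout, and $g$ at each interior collar point by the strict inequality above — and the regularized maximum preserves strong $p$-plurisubharmonicity, so $\rho$ is strongly $p$-plurisubharmonic on $D$.
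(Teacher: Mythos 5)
Your proposal is correct and follows essentially the same route as the paper: the signed distance function $\delta$, the tube-formula monotonicity $\kappa_i=\nu_i/(1-|\delta|\nu_i)$ showing that inner level sets are at least as $p$-convex as $bD$ (strictly so off $p$-flat points), and a convex increasing reparametrization of $\delta$ (your explicit $K^{-1}(e^{K\delta}-1)$ versus the paper's general $h$ with $\dot h(0)=1$, $\ddot h(0)=a>0$) to cure the vanishing normal eigenvalue. The only divergence is the globalization step: the paper flattens $h\circ\delta$ to a constant via a convex function $\phi$ and, for the strong case, adds $\epsilon\chi(\bx)\,|\bx|^2$ with a cutoff $\chi$ supported in $D$, whereas you splice onto $A(|\bx|^2-R^2)$ by a regularized maximum, which treats both cases at once; both devices are standard and correct.
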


Following the terminology introduced in \cite{DrinovecForstneric2016TAMS} and used in 
\cite{AlarconForstnericLopez2021}, $2$-convex domains are called {\em minimally convex} 
and $2$-plurisubharmonic functions are called {\em minimal plurisubharmonic} 
(as they pertain to minimal surfaces). 
Thus, a bounded minimally convex domain $D\subset \R^n$,
$n\ge 3$, with smooth boundary admits a defining function which is minimal plurisubharmonic on 
$\overline D$. This has the following corollary. Let $\D=\{z\in \C:|z|<1\}$ and $\D^*=\D\setminus \{0\}$.

%
%
\begin{corollary}\label{cor:extension}
Let $D$ be a bounded minimally convex domain in $\R^n,\ n\ge 3$, with $\Cscr^r$ boundary 
for some $r>2$. If $M$ is a connected open conformal surface and $f:M\to \R^n$ is a conformal 
harmonic map with $f(M)\subset \overline D$, then either $f(M)\subset D$ or $f(M)\subset bD$. 
If $K\subset M$ is a removable set for bounded harmonic functions, then 
every conformal harmonic map $M\setminus K\to D$ extends to a conformal harmonic map $M\to D$. 
In particular, every conformal harmonic map $\D^*\to D$ from the punctured disc  
extends to a conformal harmonic map $\D\to D$. 
\end{corollary}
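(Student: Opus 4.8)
The plan is to push everything through the composition $\rho\circ f$, where $\rho$ is the minimal plurisubharmonic (i.e.\ $2$-plurisubharmonic) defining function for $D$ furnished by Theorem \ref{th:main-simple}; recall that $D=\{\rho<0\}$, $\overline D=\{\rho\le 0\}$ and $bD=\{\rho=0\}$. First I would record the subharmonicity of $\rho\circ f$ for a conformal harmonic map $f:M\to\R^n$. In a local conformal coordinate $z=x+iy$ on $M$, harmonicity of $f$ kills the gradient term $\langle\nabla\rho(f),\Delta f\rangle$ in the chain rule, leaving
\[
	\Delta(\rho\circ f)=\Hess_\rho(f)(f_x,f_x)+\Hess_\rho(f)(f_y,f_y).
\]
Conformality gives $|f_x|=|f_y|$ and $f_x\perp f_y$, so at points where $df\ne 0$ the vectors $f_x,f_y$ span a $2$-plane $\Lambda\in G_2(\R^n)$ and the right-hand side equals $|f_x|^2\,\tr_\Lambda\Hess_\rho(f)\ge 0$ by \eqref{eq:p-psh} with $p=2$; at branch points of $f$ both terms vanish, so the inequality persists. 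Thus $u:=\rho\circ f$ is subharmonic on all of $M$.

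The dichotomy is then immediate. If $f(M)\subset\overline D$, then $u\le 0$ on the connected surface $M$. Should $u(z_0)=0$ for some $z_0\in M$, then $u$ attains its maximum at an interior point and the strong maximum principle for subharmonic functions forces $u\equiv 0$, that is $f(M)\subset bD$; otherwise $u<0$ throughout, that is $f(M)\subset D$. This proves the first assertion.

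For the extension statement I would write $g=(g_1,\ldots,g_n):M\setminus K\to D$. As $D$ is bounded, each $g_j$ is a bounded harmonic function on $M\setminus K$, so removability of $K$ yields harmonic extensions $\tilde g_j$ and hence a harmonic map $\tilde g:M\to\R^n$ with $\tilde g|_{M\setminus K}=g$. Two things remain. For conformality, each $\di\tilde g_j/\di z$ is holomorphic on $M$, hence so is $\phi:=\sum_{j=1}^n(\di\tilde g_j/\di z)^2$; since a removable set has empty interior, $\phi$ vanishes on the dense open set $M\setminus K$, and the identity principle on the connected surface $M$ gives $\phi\equiv 0$, which is exactly the conformality of $\tilde g$. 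For the image, continuity together with the density of $M\setminus K$ gives $\tilde g(M)\subset\overline D$, so the dichotomy applies to $\tilde g$: either $\tilde g(M)\subset D$ or $\tilde g(M)\subset bD$. The second alternative is excluded because $\tilde g=g$ on the nonempty set $M\setminus K$, where the values lie in $D$. Hence $\tilde g(M)\subset D$, the desired extension; the punctured-disc statement is the special case $M=\D$, $K=\{0\}$, an isolated point being removable for bounded harmonic functions.

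The conceptual core is the subharmonicity of $\rho\circ f$ combined with the maximum principle; once these are in hand, the extension reduces to the identity principle and the assumed removability. The only point I expect to require genuine care is the behaviour of $\rho\circ f$ at branch points of $f$, where $f$ is not an immersion and $\Lambda$ is undefined; there, however, $f_x=f_y=0$ makes the Hessian expression vanish identically, so subharmonicity is not lost.
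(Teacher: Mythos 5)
Your proposal is correct and follows essentially the same route as the paper: compose with the minimal plurisubharmonic defining function, apply the strong maximum principle to the subharmonic function $\rho\circ f\le 0$, and reduce the extension statement to componentwise removability of bounded harmonic functions plus the dichotomy. The paper leaves the subharmonicity of $\rho\circ f$ and the conformality of the extension (vanishing of the holomorphic quadratic differential by the identity principle) implicit; you have simply supplied those details, correctly.
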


\begin{proof}
Let $\rho$ be a defining function for $D$ such that \eqref{eq:p-psh} holds for $p=2$. 
The first claim follows from the maximum principle applied to the subharmonic function 
$\rho\circ f:M\to (-\infty,0]$, and the second claim is an immediate consequence. 
The last claim follows by observing that every bounded harmonic function on $\D^*$
extends to a harmonic function on $\D$.
\end{proof}

%
%
\begin{remark}
If $bD$ fails to be minimally convex at a point $\bp\in bD$, then there is an embedded minimal 
disc $f:\D\to D\cup \{\bp\}$ with $f(0)=\bp$ and $f(\D\setminus \{0\})\subset D$ 
(see \cite[Lemma 3.13]{HarveyLawson2013IUMJ}). Hence, the conclusion of 
Corollary \ref{cor:extension} fails in this case. 
\end{remark}

%
%
\begin{remark}
Corollary \ref{cor:extension} generalizes the classical fact that two immersed 
minimal surfaces in $\R^3$ which touch at a point but locally at that point lie on one side of one another  
must coincide. In fact, assume that $D\subset\R^3$ is a bounded domain whose smooth 
boundary contains a domain $\Sigma$ in an immersed minimal surface $\wt \Sigma\subset\R^3$
(hence, $\Sigma$ is embedded),  
and $bD$ is strongly minimally convex at each point $\bp\in bD\setminus \overline \Sigma$. 
Then, the domain $D$ is minimally convex. Let $\rho$ be a minimal plurisubharmonic defining function for $D$
provided by Theorem \ref{th:main1}. Given a connected open conformal surface $M$ and a 
nonconstant conformal harmonic map $f:M\to \overline D$, the maximum principle for the 
subharmonic function $\rho\circ f:M\to (-\infty,0]$ shows that either $f(M)\subset D$ 
or $f(M)\subset \Sigma$.
\end{remark}

%
%
We shall see that a suitable convexification of the signed distance function to $bD$,
\begin{equation}\label{eq:delta}
	\delta(\bx) = \dist(\bx, D) - \dist(\bx,\R^n\setminus D), \quad \bx\in\R^n
\end{equation}
satisfies the conclusion Theorem \ref{th:main1} in an interior collar around $bD$.
Assuming that $bD$ is of H\"older class $\Cscr^{r,\alpha}$ for some 
$r\in \{2,3,\ldots\}$ and $0<\alpha\le 1$, $\delta$ is of the same class $\Cscr^{r,\alpha}$ 
in a neighbourhood $V\subset \R^n$ of $bD$ (see Li and Nirenberg \cite{LiNirenberg2005}). 
This gives the following corollary.

%
%
\begin{corollary}
Let $D$ be a bounded $p$-convex domain with $\Cscr^r$ boundary in $\R^n$
for some $2\le p<n$ and real number $r>2$. Then every domain 
$D_t = \{\bz\in D:\delta(\bz)<t\}$ for $t<0$ close to $0$ is $p$-convex, 
and it is strongly $p$-convex if $bD$ does not contain any $p$-flat points.
\end{corollary}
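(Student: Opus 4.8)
The plan is to reduce $p$-convexity of the shrunken domain $D_t$ to a curvature condition on its boundary $bD_t=\{\delta=t\}$ and then to track how the principal curvatures evolve under the inward normal flow. By the equivalence ``$D$ is $p$-convex $\iff$ $bD$ is $p$-convex'' for bounded smooth domains (see \cite[Summary 3.16]{HarveyLawson2013IUMJ}), and since for $t<0$ close to $0$ the level set $bD_t$ is again a smooth compact hypersurface, it suffices to verify \eqref{eq:p-convexity} on $bD_t$. Here I use that $\delta$ is of class $\Cscr^{r,\alpha}$ with $|\nabla\delta|\equiv 1$ on an interior collar of $bD$ (by \cite{LiNirenberg2005}), so that $bD_t$ is smooth, the normal projection $\bp_t=\bp+t\,\mathbf{n}(\bp)\mapsto \bp$ is a diffeomorphism $bD_t\to bD$ (with $\mathbf{n}=\nabla\delta$ the outer unit normal), and---exactly as for any unit-gradient defining function---the principal interior curvatures of $bD_t$ at $\bp_t$ are the eigenvalues of $\Hess_\delta$ restricted to $T_{\bp_t}bD_t$.

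Next I would invoke the classical tube formula for the signed distance function. Since the normal line through $\bp$ is a straight segment and $\mathbf{n}(\bp)$ is constant along it, the tangent spaces $T_\bp bD$ and $T_{\bp_t}bD_t$ coincide, and the shape operator of $bD_t$ at $\bp_t$ equals $W_0(\Id+t\,W_0)^{-1}$, where $W_0=\Hess_\delta|_{T_\bp bD}$ is the shape operator of $bD$ at $\bp$. Hence the principal curvatures transform by $\nu_i(\bp,t)=\nu_i(\bp)/(1+t\,\nu_i(\bp))$. Because $bD$ is compact its curvatures are bounded, so for $t<0$ close to $0$ one has $1+t\,\nu_i(\bp)>0$ for all $i$ and all $\bp$; on this range the scalar map $x\mapsto x/(1+tx)$ is strictly increasing, so the ordering is preserved and $\nu_1(\bp,t)\le\cdots\le\nu_{n-1}(\bp,t)$.

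The heart of the matter is then the elementary identity
\[
	\frac{\nu_i}{1+t\,\nu_i}-\nu_i=\frac{-t\,\nu_i^2}{1+t\,\nu_i}\ge 0,\qquad t<0,\ 1+t\,\nu_i>0 .
\]
Summing over the $p$ smallest curvatures and using \eqref{eq:p-convexity} for $bD$ gives
\[
	\sum_{i=1}^p \nu_i(\bp,t)=\sum_{i=1}^p\frac{\nu_i(\bp)}{1+t\,\nu_i(\bp)}\ \ge\ \sum_{i=1}^p \nu_i(\bp)\ \ge\ 0 ,
\]
so $bD_t$ is $p$-convex and therefore $D_t$ is $p$-convex. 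If $bD$ has no $p$-flat point, then at each $\bp$ at least one of $\nu_1(\bp),\dots,\nu_p(\bp)$ is nonzero, whence the corresponding summand $-t\,\nu_i^2/(1+t\,\nu_i)$ is strictly positive; combined with $\sum_{i=1}^p\nu_i(\bp)\ge 0$ this yields $\sum_{i=1}^p\nu_i(\bp,t)>0$ at every point, i.e.\ $D_t$ is strongly $p$-convex.

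Notice that this argument is self-contained and does not actually invoke Theorem \ref{th:main1}; the one-sidedness $t<0$ (inward flow) is essential, as the same formula shows $p$-convexity can be destroyed for $t>0$. The only genuinely delicate points are the regularity needed to make $\Hess_\delta$ and the tube formula meaningful---supplied by the $\Cscr^{r,\alpha}$ regularity of the signed distance function \cite{LiNirenberg2005} for $r\ge 2$---and the bookkeeping of choosing $t$ in a range $(-t_0,0)$ small enough that $1+t\,\nu_{n-1}(\bp)>0$ holds uniformly over $bD$ and that the normal collar map is a diffeomorphism; granting these, everything reduces to the monotonicity of $x\mapsto x/(1+tx)$.
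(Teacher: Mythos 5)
Your proof is correct and follows essentially the same route as the paper: the corollary is precisely the intermediate analysis in the paper's proof of Theorem \ref{th:main1}, which uses the Li--Nirenberg regularity of $\delta$, the curvature evolution formula \eqref{eq:nux} $\nu_i(\bx)=\nu_i/(1+t\nu_i)$ along interior normals (your tube formula $W_0(\Id+tW_0)^{-1}$), the resulting monotonicity $\nu_i(\bx)\ge\nu_i$ for $t<0$ with equality iff $\nu_i=0$, and the equivalence of $p$-convexity of $D$ with $p$-convexity of $bD$ from \cite[Summary 3.16]{HarveyLawson2013IUMJ}. Your explicit attention to the preservation of the ordering of the curvatures and to the uniform choice of $t_0$ over the compact boundary makes precise two points the paper passes over quickly, but the substance is identical.
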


On a domain $\Omega\subset\C^n$ and for a complex line 
$\Lambda\subset \C^n$, $\tr_\Lambda \Hess_\rho(\bx)$ is the Levi form of $\rho$ at $\bx\in\Omega$ 
on any unit vector $\xi\in \Lambda$. It follows that every minimal plurisubharmonic
function on a domain in $\C^n$ is also plurisubharmonic in the usual sense of complex analysis, 
and every 2-convex domain $D$ in $\C^n$ is pseudoconvex; the converse fails.
Assuming that $bD$ is of class $\Cscr^r$ for some $r>2$, 
Levi pseudoconvexity of $bD$ is characterized by nonnegativity of the Hessian of $\delta$ applied 
in complex tangent directions to $bD$. From the differential geometric viewpoint, Levi pseudoconvexity
says that the mean sectional curvature of $bD$ in complex tangent directions is nonnegative 
(see \eqref{eq:HDelta} and \eqref{eq:Levi}). 
This leads to an observation regarding strong pseudoconvexity of domains 
$D_t = \{\bz\in D:\delta(\bz)<t\}$ for $t<0$ close to $0$; see Theorem \ref{th:main3}. 

%
%
%
%
\section{Proof of Theorem \ref{th:main1}}\label{sec:main1}
Let $Q$ be a real symmetric $n\times n$ matrix for $n\ge 2$ and 
$\Qcal$ be the associated quadratic form 
$
	\Qcal(\bx)=Q\bx\,\cdotp \bx,\ \bx\in\R^n. 
$
(The dot denotes the Euclidean inner product.) 
Let $1\le p < n$. Given a $p$-plane $\Lambda\in G_p(\R^n)$, we denote by $\tr_\Lambda \Qcal$ 
the trace of the restricted quadratic form $\Qcal|_\Lambda$.  The following result 
is \cite[Lemma 2.5 and Corollary 2.6]{HarveyLawson2013IUMJ};
we include a simple proof.

%
%
\begin{lemma}\label{lem:trace}
Let $\lambda_1\le \lambda_2\le\cdots\le \lambda_n$ be the eigenvalues of $Q$. 
For every $\Lambda\in G_p(\R^n)$ we have that 
$
	\tr_\Lambda \Qcal \ge \lambda_1+\lambda_2+\cdots+\lambda_p,
$
with equality if and only if $\Lambda$ admits an orthonormal basis $\bv_1,\ldots,\bv_p$
such that $Q\bv_i=\lambda_i\bv_i$ for $i=1,\ldots,p$. 
\end{lemma}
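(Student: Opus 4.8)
The plan is to reduce the statement to a weighted sum over an eigenbasis of $Q$ and then to a short linear-programming argument. Fix an orthonormal eigenbasis $\bu_1,\ldots,\bu_n$ of $Q$ with $Q\bu_i=\lambda_i\bu_i$, and let $\bv_1,\ldots,\bv_p$ be any orthonormal basis of a given $\Lambda\in G_p(\R^n)$. Writing each $\bv_j=\sum_{i=1}^n c_{ij}\bu_i$ with $c_{ij}=\langle\bv_j,\bu_i\rangle$, a direct computation gives
\[
	\tr_\Lambda\Qcal=\sum_{j=1}^p \langle Q\bv_j,\bv_j\rangle=\sum_{i=1}^n\lambda_i w_i,\qquad w_i:=\sum_{j=1}^p c_{ij}^2 .
\]
The weights $w_i=\sum_{j=1}^p\langle\bv_j,\bu_i\rangle^2$ are exactly the squared norms of the orthogonal projections of the vectors $\bu_i$ onto $\Lambda$; hence $0\le w_i\le 1$, while $\sum_{i=1}^n w_i=\sum_{j=1}^p|\bv_j|^2=p$.

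The inequality is then a rearrangement estimate: since $\lambda_1\le\cdots\le\lambda_n$, the sum $\sum_i\lambda_i w_i$ is minimized by placing the available total mass $p$ on the smallest eigenvalues. I would make this precise through the algebraic identity
\[
	\tr_\Lambda\Qcal-\sum_{i=1}^p\lambda_i=\sum_{i=1}^p(\lambda_p-\lambda_i)(1-w_i)+\sum_{i=p+1}^n(\lambda_i-\lambda_p)w_i,
\]
which follows by expanding the right-hand side and using $\sum_{i=1}^n w_i=p$. Each summand on the right is a product of two nonnegative factors, so the right-hand side is $\ge 0$, proving $\tr_\Lambda\Qcal\ge\lambda_1+\cdots+\lambda_p$.

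For the equality case the same identity does all the work. Equality forces every summand to vanish, which yields $w_i=1$ whenever $\lambda_i<\lambda_p$ and $w_i=0$ whenever $\lambda_i>\lambda_p$. In projection terms $w_i=1$ means $\bu_i\in\Lambda$ and $w_i=0$ means $\bu_i\perp\Lambda$. Thus $\Lambda$ contains every eigenvector whose eigenvalue is strictly below $\lambda_p$ and is orthogonal to every eigenvector whose eigenvalue is strictly above $\lambda_p$, so $\Lambda$ lies inside the sum of the eigenspaces for eigenvalues $\le\lambda_p$ and contains those for eigenvalues $<\lambda_p$. Since the latter eigenvectors number fewer than $p$, the remaining directions of $\Lambda$ lie in the $\lambda_p$-eigenspace, where every unit vector is again an eigenvector. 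Taking the $\bu_i$ with $\lambda_i<\lambda_p$ together with an orthonormal basis of the intersection of $\Lambda$ with the $\lambda_p$-eigenspace produces the desired orthonormal eigenbasis $\bv_1,\ldots,\bv_p$ of $\Lambda$ with $Q\bv_i=\lambda_i\bv_i$. The converse is immediate: such a basis gives $\tr_\Lambda\Qcal=\sum_{i=1}^p\langle Q\bv_i,\bv_i\rangle=\sum_{i=1}^p\lambda_i$.

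The only delicate point I expect is the equality analysis in the presence of repeated eigenvalues: the subtlety is that $\Lambda$ need not be spanned by the fixed vectors $\bu_i$, but only by \emph{some} eigenvectors, so the argument must allocate the $\lambda_p$-eigenspace correctly rather than insisting on one prescribed eigenbasis.
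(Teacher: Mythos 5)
Your proof is correct, and it takes a genuinely different route from the paper's. The paper argues by induction on $p$: after diagonalizing $Q$, it intersects $\Lambda$ with the invariant hyperplane $\{0\}\times\R^{n-1}$ (which meets $\Lambda$ in dimension at least $p-1$), bounds the contribution of the remaining basis vector by the Rayleigh-quotient inequality $Q\bv_1\cdot\bv_1\ge\lambda_1$, and invokes the inductive hypothesis inside the hyperplane, tracking the equality case through the induction. You instead prove everything in one step, in the style of the Ky Fan minimum principle: you write $\tr_\Lambda\Qcal=\sum_{i}\lambda_i w_i$, where $w_i$ is the squared norm of the projection of the eigenvector $\bu_i$ onto $\Lambda$, note the constraints $0\le w_i\le 1$ and $\sum_i w_i=p$, and then exhibit the identity
\[
\tr_\Lambda\Qcal-\sum_{i=1}^p\lambda_i
=\sum_{i=1}^p(\lambda_p-\lambda_i)(1-w_i)+\sum_{i=p+1}^n(\lambda_i-\lambda_p)w_i,
\]
whose right-hand side is termwise nonnegative; I checked that the identity and the bound $w_i\le 1$ are correct. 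What your approach buys: the inequality and the equality analysis fall out of a single formula, and the delicate case of repeated eigenvalues---where $\Lambda$ need not contain any of the \emph{fixed} eigenvectors $\bu_i$ with $\lambda_i=\lambda_p$---is handled transparently, since the identity only forces $\Lambda$ to contain the eigenspaces for eigenvalues strictly below $\lambda_p$ and to be orthogonal to those strictly above, after which the remaining $p-m$ directions of $\Lambda$ automatically sit inside the $\lambda_p$-eigenspace, where any orthonormal set consists of eigenvectors. What the paper's induction buys is brevity and coordinate-freeness at each step: no weights or projection bookkeeping, just the minimum principle for $\lambda_1$ together with the invariance of $\{0\}\times\R^{n-1}$. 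Both arguments are complete and yield exactly the stated equality criterion.
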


\begin{proof}
This is obvious for $p=1$, and we proceed by induction on $p$. 
We may assume that in the standard basis $\be_1,\ldots,\be_n$ of $\R^n$ the matrix 
$Q$ is diagonal with $Q\be_i=\lambda_i \be_i$ for $i=1,\ldots,n$. 
The subspace $\Lambda'=\Lambda\cap (\{0\}\times\R^{n-1})$ has dimension at least $p-1$.
Pick an orthonormal basis $\bv_1,\bv_2,\ldots,\bv_n$ of $\Lambda$ with 
$\bv_2,\ldots,\bv_n \in \Lambda'$. Then, 
$
	\tr_\Lambda \Qcal = Q\bv_1\,\cdotp \bv_1 + \sum_{j=2}^p Q\bv_j\,\cdotp \bv_j.
$
Clearly, $Q\bv_1\,\cdotp \bv_1 \ge \lambda_1$ 
with equality if and only if $Q\bv_1=\lambda_1\bv_1$. Since $\Lambda'':=\span\{\bv_2,\ldots,\bv_p\}$ 
is contained in the invariant subspace $\{0\}\times\R^{n-1}$ of $Q$ with eigenvalues 
$\lambda_2\le \cdots \le \lambda_{n}$, the inductive hypothesis
gives $\sum_{j=2}^p Q\bv_j\,\cdotp \bv_j\ge  \lambda_2+\cdots +\lambda_p$,
with equality if and only if $\Lambda''$ is spanned by eigenvectors for eigenvalues
$\lambda_2,\ldots,\lambda_p$. This completes the induction step. 
\end{proof}

%
%
\begin{proof}[Proof of Theorem \ref{th:main1}]
Let $D\subset \R^n$ be a bounded domain with $\Cscr^{r,\alpha}$ boundary
for some integer $r\ge 2$ and real number $0<\alpha\le 1$. 
Denote by $\delta$ the signed distance function \eqref{eq:delta} from $bD$. 
By Li and Nirenberg \cite{LiNirenberg2005}, $\delta$ is of the same class $\Cscr^{r,\alpha}$
in a collar neighbourhood $V\subset \R^n$ of $bD$. 
We recall some further properties of $\delta$, referring to 
Bellettini \cite[Theorem 1.18, p.\ 14]{Bellettini2013} 
and Gilbarg and Trudinger \cite[Section 14.6]{GilbargTrudinger1983}.
Shrinking $V$ around $bD$ if necessary, there is a $\Cscr^r$ projection 
$\pi:V\to bD$ such that for any $\bx\in V$, $\bp=\pi(\bx)\in bD$ is the unique nearest 
point to $\bx$ on $bD$. The gradient $\nabla \delta$ has constant norm $|\nabla\delta|=1$ on $V$, 
and it has constant value on the intersection of $V$ with the normal line 
$N_\bp=\bp+\R\,\cdotp \nabla\delta(\bp)$ at $\bp\in bD$.  
By a translation and an orthogonal rotation we may assume that $\bp=\zero,\ T_\zero bD=\{x_n=0\}$, 
and the standard basis vectors $\be_1,\be_2,\ldots,\be_n=\nabla\delta(\zero)$ of $\R^n$ 
diagonalize the matrix $H$ of $\Hess_\delta(\zero)$:
\[
	H\be_i=\nu_i \be_i,\quad\ i=1,\ldots,n-1;\quad H\be_n=0.
\]
The restriction of $\Hess_\delta(\zero)$ to the tangent space $T_\zero bD=\R^{n-1}\times \{0\}$ 
is the second fundamental form of $bD$ at $\zero$ and $\nu_1,\ldots,\nu_{n-1}$
are the principal curvatures of $bD$ at $\zero$ from the side $-\be_n=-\nabla\delta(\zero)$. 
By reordering we may assume that $\nu_1\le \nu_2\le \cdots\le \nu_{n-1}$.
For any point $\bx=(0,\ldots,0,t)\in N_\zero \cap V$ the basis $\be_1,\ldots,\be_n$ 
also diagonalizes $\Hess_\delta(\bx)$, with the eigenvalues 
\begin{equation}\label{eq:nux}
	\nu_i(\bx)=\frac{\nu_i}{1 + t \nu_i},\quad\ i=1,\ldots,n.
\end{equation}
In particular, $\nu_n(\bx)=0$. Note that for $t<0$ (which corresponds to 
$\bx\in N_\zero \cap V \cap D$) we have $\nu_i(\bx)\ge \nu_i$, while 
for $t>0$ (i.e., $\bx\in N_\zero \cap V \setminus \overline D$) we have 
$\nu_i(\bx)\le \nu_i$. In both cases equality holds if and only if $\nu_i=0$. 
Furthermore, $\nu_1(\bx) \le \cdots \le \nu_{n-1}(\bx)$ for all $\bx \in N_\zero\cap V$. 

Assume now that $bD$ is $p$-convex at $\zero$ in the sense that condition \eqref{eq:p-convexity}
holds, i.e., 
\[	
	\nu_1+\nu_2+\cdots+\nu_p\ge 0.
\] 
By Lemma \ref{lem:trace} applied with $\Qcal=\Hess_\delta(\zero)$ 
restricted to $T_\zero bD$, this is equivalent to 
\[
	\tr_\Lambda \Hess_\delta(\zero) \ge 0\ \ \text{for every $p$-plane $\Lambda\subset T_\zero bD$}.
\]
Consider the family of domains $D_t=\{\delta<t\}$ for $t$ near $0$. 
As $t$ increases, the domains $D_t$ strictly increase, and $D_0=D$. The tangent space
to $bD_t=\{\delta=t\}$ at $\bx=(0,\ldots,0,t)\in bD_t$ is $\R^{n-1}\times \{0\}$, 
$\be_n=\nabla\delta(\bx)$ is the unit outer normal vector at $\bx$, and the numbers 
$\nu_i(\bx)$ \eqref{eq:nux} for $i=1,\ldots,n-1$ are the principal normal curvatures of $bD_t$ at $\bx$. 
From what has been said, we see that at $\bx=(0,\ldots,0,t)\in bD_t$ with $t<0$ we have that
\[
	\nu_1(\bx)+\nu_2(\bx)+\cdots+\nu_p(\bx)\ge \nu_1+\nu_2+\cdots+\nu_p\ge 0
\]
with equality if and only if $\nu_1=\nu_2=\cdots=\nu_p=0$, i.e., the point $\zero\in bD$
is $p$-flat. Since this analysis holds at any point $\bp\in bD$, we conclude that for $t<0$
close to $0$ the boundary $bD_t$ is $p$-convex, and it is strongly $p$-convex if and only if
$bD$ does not contain any $p$-flat points.

It remains to find a defining function for $D$, having the domains $D_t$ as sublevel 
sets, which is $p$-plurisubharmonic on $\overline D$, and is strongly $p$-plurisubharmonic 
on $D$ provided that $bD$ has no $p$-flat points. 
Consider the function $h\circ \delta:V\to\R$, where $h$ is a smooth convex increasing 
function on $\R$ with $h(0)=0$, $\dot h(0)=1$, and $\ddot h(0)=a>0$ (to be determined). Then, 
\[
	\Hess_{h\circ \delta} = 
	(\dot h\circ \delta) \Hess_\delta + (\ddot h\circ \delta) \nabla \delta\, \cdotp (\nabla \delta)^T.
\]
Assume that for $\bp\in bD$ the orthonormal vectors $\bv_1,\ldots,\bv_{n-1},\bv_n$ 
diagonalize $\Hess_\delta(\bp)$, where $T_\bp D=\span\{\bv_1,\ldots,\bv_{n-1}\}$ and 
$\bv_n=\nabla\delta(\bp)$. Then the vectors $\bv_1,\ldots,\bv_{n-1}$ lie in the kernel of the matrix 
$\nabla \delta(\bp)\, \cdotp \nabla \delta(\bp)^T$, while $\bv_n$ is an eigenvector with eigenvalue $1$.  
Hence, the basis $\bv_1,\ldots,\bv_n$ diagonalizes 
$\Hess_{h\circ \delta}(\bx)$ at every point $\bx=\bp+ \delta(\bx)\bv_n\in N_\bp\cap V$, 
the eigenvalues corresponding to $\bv_1,\ldots,\bv_{n-1}$  
get multiplied by the number $\dot h(d(\bx))$, which is close to $1$ if $\bx$ is close to $\bp$, 
and the eigenvalue in the normal direction $\bv_n=\nabla \delta(\bx)$ is $\ddot h(\delta(\bx))\approx a$. 
By choosing $a$ such that $a+\lambda_1(\bp)>0$ for all $\bp\in bD$ 
and shrinking the neighbourhood $V$ around $bD$, 
the function $h\circ \delta$ is $p$-plurisubharmonic on $\overline D\cap V$,
and it is strongly $p$-plurisubharmonic on $D\cap V$ if $bD$ has no $p$-flat points.
Choose $c<0$ such that $\{c\le \delta\le 0\}\subset \overline D\cap V$. 
Let $\phi:\R\to\R$ be a convex increasing function such that $\phi(t)=2c/3$ for $t\le c$ and 
$\phi(t)=t$ for $t\ge c/2$. Then, $\rho=\phi\circ h\circ \delta$ is a well-defined function 
on $\overline D\cup V$ which equals $h\circ \delta$ outside $D_{c/2}$ and is 
$p$-plurisubharmonic on $\overline D$.

Assume now that $h\circ \delta$ is strongly $p$-plurisubharmonic on a collar 
$\{c\le \delta<0\}$. Then, $\rho=\phi\circ h\circ \delta$ is strongly $p$-plurisubharmonic
on $\{c/2 \le \delta<0\}\subset D\cap V$. We take 
\[
	\tilde \rho(\bx) = \rho(\bx) + \epsilon \chi(\bx)\cdotp |\bx|^2, 
\]
where $\chi:\R^n\to [0,1]$ is a smooth function which equals $1$ on $\overline D_{c/3}$
and has compact support contained in $D$. The support of the differential of $\chi$ is then compact
and contained in the set $\{c/2< \delta<0\}$ where $\rho$ is strongly $p$-plurisubharmonic. 
Note that $|\bx|^2=\sum_{i=1}^n x_i^2$ is strongly $p$-plurisubharmonic on $\R^n$. 
Choosing $\epsilon>0$ small enough therefore ensures that $\tilde \rho$ is strongly
plurisubharmonic on $D$. This proves Theorem \ref{th:main1}. 
\end{proof}

%
%
Let us look more closely at the case $p=2$ of Theorem \ref{th:main1}, which is of interest for the  
theory of $2$-dimensional minimal surfaces. In particular, every $2$-convex domain in $\R^n$, $n\ge 3$,
admits plenty of proper minimal surfaces parameterized by conformal harmonic immersions from 
any given bordered Riemann surface (see \cite[Theorems 8.3.1 and 8.3.11]{AlarconForstnericLopez2021}). 
In this case, condition \eqref{eq:p-convexity2} (which is equivalent to condition \eqref{eq:p-convexity}
in Theorem \ref{th:main1}) can be expressed in more intrinsic differential geometric terms, thereby 
exposing a connection to Levi pseudoconvexity studied in complex analysis; see the following section 
for the latter. 

Let $D$ be a smoothly bounded domain in $\R^n$, $n\ge 3$.
Given a point $\bp\in bD$ and a $2$-plane $\Lambda\subset T_\bp bD$,  
let $S(\bp,\Lambda)$ be the surface obtained by intersecting $bD$ with the affine 3-plane 
$\bp\in \Sigma_\bp\cong \R^3$ spanned by $\Lambda$ and the normal vector to $bD$ at $\bp$. 
Let $\Qcal_\bp$ denote the second fundamental form of $bD$ at $\bp$.
The principal curvatures $\lambda_1(\bp,\Lambda),\lambda_2(\bp,\Lambda)$ of 
$S(\bp,\Lambda)$ at $\bp$ are the eigenvalues of the restricted quadratic form 
$\Qcal_\bp|_{\Lambda}$. Their sum 
\begin{equation}\label{eq:Hp}
	H(\bp,\Lambda) =  \lambda_1(\bp,\Lambda)+\lambda_2(\bp,\Lambda) 
\end{equation}
is the mean curvature of $S(\bp,\Lambda)$ at $\bp$, and their product
\begin{equation}\label{eq:Kp}
	K(\bp,\Lambda) = \lambda_1(\bp,\Lambda)\,\cdotp \lambda_2(\bp,\Lambda)
\end{equation}
is the Gaussian curvature of $S(\bp,\Lambda)$ at $\bp$. These numbers are,
respectively, the sectional mean curvature and the sectional Gaussian curvature of
$bD$ at $\bp$ on the 2-plane $\Lambda$. While $H$ reflects the way how $bD$ sits in 
$\R^n$, $K$ is an intrinsic quantity depending only on the induced metric on $bD$.
If $\rho$ is a defining function for $D$ such that $|\nabla\rho(\bp)|=1$, 
then $\Qcal_\bp$ equals the Hessian $\Hess_\rho(\bp)$ restricted to $T_\bp bD$,
and hence
\begin{equation}\label{eq:HDelta}
	H(\bp,\Lambda) = \tr_\Lambda \Hess_\rho(\bp) = \Delta(\rho|_{\bp+\Lambda})(\bp).
\end{equation}
If $H(\bp,\Lambda)=0$ then $K(\bp,\Lambda) = -\lambda_1(\bp,\Lambda)^2\le 0$,
and $K(\bp,\Lambda)=0$ if and only if $\Qcal_\bp|_\Lambda$ vanishes.
Theorem \ref{th:main1} for $p=2$ can now be stated as follows.

%
%
\begin{theorem}\label{th:main2}
Let $D$ be a bounded domain in $\R^n,\ n\ge 3$, with $\Cscr^{r,\alpha}$ boundary
for some $r\ge 2$ and $0<\alpha\le 1$. If 
\begin{equation}\label{eq:MCboundary}
	H(\bp,\Lambda)\ge 0\ \ 
	\text{for every point $\bp\in bD$ and 2-plane $\Lambda\subset T_\bp bD$},
\end{equation} 
then $D$ admits a $\Cscr^{r,\alpha}$ defining function $\rho$ satisfying \eqref{eq:p-psh}
with $p=2$ (i.e., $\rho$ is minimal plurisubharmonic on $\overline D$). 
If in addition there is no $(\bp,\Lambda)$ as above such that 
$H(\bp,\Lambda)= K(\bp,\Lambda)=0$, then $\rho$ can be chosen strongly 
minimal plurisubharmonic on $D$.
\end{theorem}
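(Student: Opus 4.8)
The plan is to recognize Theorem \ref{th:main2} as a verbatim translation of Theorem \ref{th:main1} in the case $p=2$, so that the task reduces to two dictionary checks: matching the sectional mean-curvature hypothesis \eqref{eq:MCboundary} with the principal-curvature hypothesis \eqref{eq:p-convexity}, and matching the exclusion of pairs $(\bp,\Lambda)$ with $H(\bp,\Lambda)=K(\bp,\Lambda)=0$ against the exclusion of $2$-flat points. Once both correspondences are in place, the two conclusions follow at once by invoking Theorem \ref{th:main1}.

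First I would fix a defining function $\rho$ normalized so that $|\nabla\rho(\bp)|=1$ on $bD$, so that by \eqref{eq:HDelta} one has $H(\bp,\Lambda)=\tr_\Lambda\Hess_\rho(\bp)$ for every $2$-plane $\Lambda\subset T_\bp bD$, while $\Qcal_\bp=\Hess_\rho(\bp)|_{T_\bp bD}$ is the second fundamental form of $bD$ at $\bp$, whose eigenvalues are the principal curvatures $\nu_1(\bp)\le\cdots\le\nu_{n-1}(\bp)$. Applying Lemma \ref{lem:trace} to $\Qcal_\bp$ with $p=2$ shows that the minimum of $H(\bp,\Lambda)$ over all $2$-planes $\Lambda\subset T_\bp bD$ equals $\nu_1(\bp)+\nu_2(\bp)$. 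Hence \eqref{eq:MCboundary} holds at $\bp$ precisely when $\nu_1(\bp)+\nu_2(\bp)\ge 0$, which is \eqref{eq:p-convexity} for $p=2$; the first (non-strict) conclusion of Theorem \ref{th:main2} then follows immediately from Theorem \ref{th:main1}.

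For the strong statement I would verify that, under the standing hypothesis \eqref{eq:MCboundary}, the condition ``no pair $(\bp,\Lambda)$ satisfies $H(\bp,\Lambda)=K(\bp,\Lambda)=0$'' is equivalent to ``$bD$ has no $2$-flat points''. The forward implication is routine: at a $2$-flat point $\bp$ one has $\nu_1(\bp)=\nu_2(\bp)=0$, and taking $\Lambda$ to be the span of the corresponding eigenvectors of $\Qcal_\bp$ makes $\Qcal_\bp|_\Lambda$ vanish, so both $H$ and $K$ vanish there. For the converse, the relations $H(\bp,\Lambda)=\lambda_1(\bp,\Lambda)+\lambda_2(\bp,\Lambda)=0$ and $K(\bp,\Lambda)=\lambda_1(\bp,\Lambda)\lambda_2(\bp,\Lambda)=0$ together force $\lambda_1(\bp,\Lambda)=\lambda_2(\bp,\Lambda)=0$, hence $\tr_\Lambda\Qcal_\bp=0$; combining this with Lemma \ref{lem:trace} and the $2$-convexity \eqref{eq:MCboundary} gives $0=\tr_\Lambda\Qcal_\bp\ge\nu_1(\bp)+\nu_2(\bp)\ge 0$, so equality holds throughout. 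The equality clause of Lemma \ref{lem:trace} then shows that $\Lambda$ is spanned by eigenvectors of $\Qcal_\bp$ for $\nu_1(\bp),\nu_2(\bp)$, whence these two global principal curvatures equal the vanishing numbers $\lambda_1(\bp,\Lambda),\lambda_2(\bp,\Lambda)$; thus $\nu_1(\bp)=\nu_2(\bp)=0$ and $\bp$ is $2$-flat. Feeding this equivalence into the strong half of Theorem \ref{th:main1} completes the argument.

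The one genuinely delicate step---and the place I would be most careful---is the converse in the previous paragraph, where one must pass from the vanishing of the sectional mean and Gaussian curvatures on a single $2$-plane $\Lambda$ to the vanishing of the two smallest \emph{global} principal curvatures of $bD$ at $\bp$. This passage is not purely local to $\Lambda$: it relies essentially on the equality case of Lemma \ref{lem:trace}, and it is exactly here that the standing $2$-convexity hypothesis is indispensable, since it supplies the reverse inequality $\nu_1(\bp)+\nu_2(\bp)\ge 0$ that, pitted against $\tr_\Lambda\Qcal_\bp=0$, pins the trace bound at equality.
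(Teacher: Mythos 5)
Your proposal is correct and takes essentially the same approach as the paper, which presents Theorem \ref{th:main2} as a direct restatement of Theorem \ref{th:main1} for $p=2$ via the identity \eqref{eq:HDelta} and the observation that $H(\bp,\Lambda)=K(\bp,\Lambda)=0$ holds exactly when $\Qcal_\bp|_\Lambda$ vanishes; your dictionary built on Lemma \ref{lem:trace} fills in precisely the details the paper leaves implicit. One small logical remark: the direction you flag as delicate ($H(\bp,\Lambda)=K(\bp,\Lambda)=0$ forces $\bp$ to be $2$-flat) is not actually needed to invoke Theorem \ref{th:main1}, since the routine implication (a $2$-flat point yields a plane $\Lambda$ with $H=K=0$) already shows, by contraposition, that the hypothesis of Theorem \ref{th:main2} implies the no-$2$-flat-points hypothesis of Theorem \ref{th:main1}; proving the full equivalence is harmless and confirms that the two formulations coincide.
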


%
%
%
%
\section{A remark on Levi pseudoconvex domains}\label{sec:Levi}
For a smoothly bounded domain $D$ in a complex Euclidean space $\C^n, n\ge 2$, the $2$-convexity 
condition \eqref{eq:p-convexity2} applied only to complex lines $\Lambda\subset T_\bp bD$ 
is precisely the Levi pseudoconvexity condition.  
Indeed, if $\bv\in \Lambda=\C \bv$ is a unit vector then 
\begin{equation}\label{eq:Levi}
	(dd^c \rho)_\bp (\bv,J\bv)=\Delta(\rho|_{\bp+\Lambda})(\bp) = 
	\tr_\Lambda \Hess_\rho(\bp) = H(\bp,\Lambda),
\end{equation}
where $J$ is the standard almost complex structure operator on $\C^n$.

It is well known that if $D$ is a bounded pseudoconvex domain in $\C^n$ then the function
$-\log \dist(\cdotp,bD)$ is plurisubharmonic on $D$, and hence the domains
$D_t=\{z\in D:\dist(z,bD)>t\}=\{\delta<-t\}$ for $t>0$ are pseudoconvex. This is a stronger result
than we can get for minimally convex domains, and it does not require any smoothness of $bD$.
However, the following observation related to Theorem \ref{th:main2} seems worthwhile recording. 

%
%
\begin{theorem}\label{th:main3}
Let $D$ be a bounded pseudoconvex domain in $\C^n,\ n\ge 2,$ 
with $\Cscr^{r,\alpha}$ boundary for some $r\ge 2$ and $0<\alpha\le 1$, and  
let $\delta$ be given by \eqref{eq:delta}. 
If the sectional curvature $K(\bp,\Lambda)$ \eqref{eq:Kp} is nonzero
for every point $\bp\in bD$ and complex line $\Lambda\subset T_\bp bD$ on which the 
Levi form \eqref{eq:Levi} vanishes, then the domains $D_t=\{\delta<t\}$ for $t<0$ 
close to $0$ are strongly pseudoconvex.
\end{theorem}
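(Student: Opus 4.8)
The plan is to reduce the strong pseudoconvexity of $bD_t$ to a strict positivity statement for the Levi form of $\delta$ restricted to the complex tangent spaces of the parallel hypersurfaces $bD_t=\{\delta=t\}$, and then to upgrade the (non-strict) Levi pseudoconvexity of $bD=bD_0$ to strict positivity for all small $t<0$ by a compactness argument in which the hypothesis on $K$ is used exactly in those directions where the Levi form of $bD$ degenerates. First I would set up the geometry as in the proof of Theorem \ref{th:main1}, now in $\C^n=\R^{2n}$. Fixing $\bp\in bD$ and normalizing $\bp=\zero$, $\nabla\delta(\zero)=\be_{2n}$, let $\be_1,\dots,\be_{2n}$ diagonalize $\Hess_\delta(\zero)$ with eigenvalues $\nu_1,\dots,\nu_{2n-1}$ on $T_\zero bD$ and $0$ in the normal direction $\be_{2n}$. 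For $\bx=(0,\dots,0,t)\in bD_t$ the same basis diagonalizes $\Hess_\delta(\bx)$ with eigenvalues $\nu_i(\bx)=\nu_i/(1+t\nu_i)$ as in \eqref{eq:nux}. The essential point is that the complex tangent spaces coincide, $T^{\C}_\bx bD_t=T^{\C}_\zero bD$, since both are the orthogonal complement of $\span_\R\{\be_{2n},J\be_{2n}\}$ and the real normal $\be_{2n}$ is constant along the normal line $N_\zero$.

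Next, for a complex line $\Lambda=\C\bv\subset T^{\C}_\zero bD$ with $\{\bv,J\bv\}$ an orthonormal basis of $\Lambda$, formula \eqref{eq:Levi} identifies the Levi form of $\delta$ at $\bx$ on $\Lambda$ with $G(\bp,\Lambda,t):=\tr_\Lambda\Hess_\delta(\bx)$, and writing $\bv=\sum_{i<2n}a_i\be_i$ and $J\bv=\sum_{i<2n}b_i\be_i$ gives the one-variable expression $G(\bp,\Lambda,t)=\sum_{i=1}^{2n-1}\frac{\nu_i}{1+t\nu_i}(a_i^2+b_i^2)$. Strong pseudoconvexity of $bD_t$ is then exactly the assertion that $G(\bp,\Lambda,t)>0$ for every $\bp\in bD$ and every complex line $\Lambda\subset T^{\C}_\bp bD$. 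I would introduce the two functions $\phi(\bp,\Lambda)=G(\bp,\Lambda,0)=H(\bp,\Lambda)$ and $\psi(\bp,\Lambda)=-\di_t G(\bp,\Lambda,0)=\sum_i\nu_i^2(a_i^2+b_i^2)=|\Hess_\delta(\zero)\bv|^2+|\Hess_\delta(\zero)J\bv|^2$, both continuous and nonnegative on the compact space $\Sigma$ of pairs $(\bp,\Lambda)$; here $\phi\ge 0$ is precisely the Levi pseudoconvexity of $D$ via \eqref{eq:Levi}.

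The key step is to show $\phi+\psi>0$ on all of $\Sigma$. If $\phi(\bp,\Lambda)=0$, the Levi form of $bD$ vanishes on $\Lambda$; if in addition $\psi(\bp,\Lambda)=0$, then $\Hess_\delta(\zero)\bv=\Hess_\delta(\zero)J\bv=0$, so $\Qcal_\bp|_\Lambda$ vanishes identically and hence $K(\bp,\Lambda)=0$ by \eqref{eq:Kp} — exactly the configuration forbidden by hypothesis. Thus $\phi+\psi$ has no zero on $\Sigma$, and by compactness $\phi+\psi\ge 2c$ for some $c>0$. Finally I would expand $G(\bp,\Lambda,t)=\phi-t\psi+R$ with remainder $R=\sum_i\frac{t^2\nu_i^3}{1+t\nu_i}(a_i^2+b_i^2)$ satisfying $|R|\le Mt^2$ uniformly on $\Sigma$ (the $\nu_i$ are bounded on the compact set $bD$). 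For $t=-s$ with $0<s\le 1$ this gives $G\ge s\phi+s\psi-Ms^2\ge s(\phi+\psi)-Ms^2\ge 2cs-Ms^2$, which is positive once $0<s<2c/M$. Hence $bD_t$ is strongly pseudoconvex for all $t<0$ sufficiently close to $0$.

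The main obstacle I anticipate is the uniformity: for each fixed $(\bp,\Lambda)$ the positivity of $G(\bp,\Lambda,t)$ for small $t<0$ is immediate, but the admissible threshold could a priori degenerate to $0$ as $(\bp,\Lambda)$ varies over $\Sigma$, and the entire force of the argument is to exclude this. This is precisely what the uniform lower bound $\phi+\psi\ge 2c$, combined with the uniform quadratic control $|R|\le Mt^2$, delivers. The second delicate point, where the hypothesis genuinely enters, is the implication $\psi(\bp,\Lambda)=0\Rightarrow \Qcal_\bp|_\Lambda=0\Rightarrow K(\bp,\Lambda)=0$, which links the vanishing of the first-order correction of the Levi form along $N_\bp$ to the vanishing of the sectional Gaussian curvature and thereby routes the argument exactly through the excluded degeneracy.
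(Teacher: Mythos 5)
Your proposal is correct, and it is built on the same geometric skeleton as the paper's proof: the signed distance function, the exact eigenvalue formula \eqref{eq:nux} for $\Hess_\delta$ along normal lines, the observation that the maximal complex tangent space is constant along each normal line, and---where the hypothesis enters---the implication that vanishing of $\Hess_\delta(\bp)$ on a complex tangent line $\Lambda$ forces $H(\bp,\Lambda)=K(\bp,\Lambda)=0$ via \eqref{eq:Kp}, which is forbidden. The difference lies in the final mechanism. The paper uses the termwise monotonicity $\nu_i(\bx)\ge \nu_i$ for $t<0$, with equality only when $\nu_i=0$, to get the exact comparison $\tr_\Lambda \Hess_\delta(\bx)\ge H(\bp,\Lambda)\ge 0$ of \eqref{eq:decrease}; equality forces $\Lambda\subset\ker \Hess_\delta(\bp)$, hence $H=K=0$, so under the hypothesis the Levi form of $bD_t$ is strictly positive pointwise, and since $bD_t$ is a compact $\Cscr^2$ hypersurface this is already strong pseudoconvexity---for \emph{every} $t<0$ within the collar, with no Taylor expansion and no explicit uniformity argument. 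You instead linearize in $t$, writing $G=\phi+s\psi+R$ with $s=-t$, prove $\phi+\psi\ge 2c>0$ on the compact pair space $\Sigma$ (in fact your $\psi=0$ alone already yields $H=K=0$, so $\psi>0$ wherever the Levi form $\phi$ vanishes), and bound the remainder by $Ms^2$. This costs you the remainder estimate and restricts the conclusion to the interval $0<s<2c/M$, but it buys an explicit, uniform quantitative lower bound $2cs-Ms^2$ for the Levi form of $bD_{-s}$ over all points and complex tangent lines, which the paper's qualitative monotonicity argument leaves implicit. Since both routes invoke pseudoconvexity ($\phi\ge 0$) and the curvature hypothesis in exactly the same way, your proof is best described as a quantified variant of the paper's argument rather than a genuinely different one; all of its steps (the identification $T^\C_\bx bD_t=T^\C_\bp bD$, the formula for $\psi$ as $|\Hess_\delta(\bp)\bv|^2+|\Hess_\delta(\bp)J\bv|^2$, the compactness of $\Sigma$, and the uniform bound on $R$) check out.
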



%
%
\begin{proof} 
Let $\delta$ denote the signed distance function \eqref{eq:delta} to $bD$.
Fix a point $\bp\in bD$ and let $\bv=\nabla \delta(\bp)$. 
Let $J$ denote the standard almost complex structure on 
$\C^n$, which corresponds to multiplication by $\imath$ on complex tangent vectors.
The tangent space to $bD$ at $\bp$ is an orthogonal direct
sum $T_\bp bD= T_\bp^\C bD\oplus E$, where $\Sigma:=T_\bp^\C bD=T_\bp bD\cap J(T_\bp bD)$
is the maximal complex subspace of $T_\bp bD$ (a complex hyperplane) 
and $E$ is the real line orthogonal to $T_\bp^\C bD$. Note that $J(E)=\R \bv$ 
is the normal line to $bD$ at $\bp$. The Levi pseudoconvexity condition is that 
$H(\bp,\Lambda) = \tr_\Lambda \Hess_\delta(\bp) \ge 0$ for every complex line 
$\Lambda\subset T_\bp^\C bD$ (see \eqref{eq:Levi}). 

Let $D_t=\{\delta<t\}$ for $t<0$ close to $0$ and $\bx=\bp+t\bv \in bD_t$. 
Then, $T_\bx bD_t$ is the orthogonal complement to $\bv$, and hence $T_\bx bD_t=\Sigma\oplus E$, i.e., 
the same orthogonal decomposition of $T_\bx bD_t$ holds along the line $N_\bp=\bp+\R\bv$. 
To prove the theorem, it suffices to show that $\tr_\Lambda \Hess_\delta(\bx) \ge H(\bp,\Lambda)$, 
with equality if and only if $H(\bp,\Lambda)=K(\bp,\Lambda)=0$. 
This will imply that $bD_t$ for $t<0$ close to $0$ is Levi pseudoconvex, and it is strongly Levi pseudoconvex if and only if there are no complex lines $\Lambda\subset T_\bp bD$, $\bp\in bD$, 
with $H(\bp,\Lambda)=K(\bp,\Lambda)=0$.

Let $\bv_1,\dots,\bv_{n-1}\in T_\bp bD$ be orthonormal eigenvectors of $\Hess_\delta(\bp)$ 
with the eigenvalues $\nu_1\le \nu_2\le \cdots\le \nu_{n-1}$. 
Fix a complex line $\Lambda\subset T_\bp bD$. For any orthonormal basis 
$\xi=\sum_{i=1}^{n-1} \xi_i \bv_i$ and $\eta=\sum_{i=1}^{n-1} \eta_i \bv_i$ for $\Lambda$
we have that 
\begin{equation}\label{eq:decrease}
	\tr_\Lambda \Hess_\delta(\bx) = \sum_{i=1}^{n-1} \nu_i(\bx)(\xi_i^2+\eta_i^2)
	\ge \sum_{i=1}^{n-1} \nu_i(\xi_i^2+\eta_i^2)
	= \tr_\Lambda \Hess_\delta(\bp) = H(\bp,\Lambda),
\end{equation}
where we used $\nu_i(\bx)\ge \nu_i$ (see \eqref{eq:nux}).
Equality holds if and only if $\xi_i^2+\eta_i^2=0$ whenever 
$\nu_i\ne 0$ (since for such $i$ we have $\nu_i(\bx)>\nu_i$).
Clearly this holds if and only if the vectors $\xi$ and $\eta$ 
are contained in the kernel of $\Hess_\delta(\bp)$. Since this argument holds for every
orthonormal basis of $\Lambda$, we infer that equality holds in \eqref{eq:decrease}
if and only if $H(\bp,\Lambda)=K(\bp,\Lambda)=0$.
\end{proof}

%
%
\begin{remark}\label{rem:pshdef}
(A) Regarding plurisubharmonic defining functions,
there is a major difference between the minimal case (cf.\ Theorem \ref{th:main1}) 
and the complex (Levi) case. It has been known since the 1970s
through examples of Diederich and Forn{\ae}ss \cite{DiederichFornaess1977MA}
and Forn{\ae}ss \cite[Example 5]{Fornaess1979} that there exist bounded 
weakly pseudoconvex domains $D\subset\C^2$ with smooth real analytic boundaries
which do not admit a defining function that is plurisubharmonic on $\overline D$,
in the sense that its Levi form is nonnegative at every point of $\overline D$ in all
complex directions. Furthermore, on a pseudoconvex domain admitting a plurisubharmonic defining
function the $\dibar$--Neumann problem is exactly regular; see Boas and Straube
\cite{BoasStraube1991}. This is not true on all weakly pseudoconvex domains; 
it fails in particular on certain worm domains according to 
Barrett \cite{Barrett1992} and Christ \cite{Christ1996}.
Hence, Theorem \ref{th:main2} has no analogue in the complex case.
The reason is that Levi pseudoconvexity provides much less information 
since it only pertains to complex lines, as opposed to all $2$-planes tangent to the boundary. 
In particular, it does not give any condition in the direction 
normal to the maximal complex subspace of $T_\bp bD$. 

(B) On the other hand, it was shown by Diederich and Forn\ae ss in \cite{DiederichFornaess1978MM}
that locally near any boundary point of a smoothly bounded weakly pseudoconvex domain 
in $\C^n$ one can make a local holomorphic change of coordinates so that the level 
sets of the signed distance function are strongly pseudoconvex in the interior 
of the new local domain. The point is that, while a biholomorphic map 
preserves the sign of the Levi form on complex directions (and hence of the function $H(\bp,\Lambda)$
in \eqref{eq:Levi}), the sectional Gaussian curvature $K(\bp,\Lambda)$ \eqref{eq:Kp} may very well change 
from zero to a nonzero value. If we ensure that this happens on all complex tangent directions
on which the Levi form vanishes, then Theorem \ref{th:main3} applies.
\end{remark}

%
%

%
%
%
\smallskip
\noindent {\bf Acknowledgement.} 
I wish to thank John Erik Forn\ae ss for the information discussed in Remark \ref{rem:pshdef} (B)
and for the relevant reference \cite{DiederichFornaess1978MM}. I also thank the referee for 
remarks which helped me to improve the presentation.


\end{document}